\newtheorem{theorem}{Theorem}[section]
\newtheorem{corollary}[theorem]{Corollary}
\newtheorem{lemma}[theorem]{Lemma}
\theoremstyle{definition}
\newtheorem{definition}[theorem]{Definition}
\theoremstyle{remark}
\numberwithin{equation}{section}
\begin{document}

\title{Characterization of $\ell_p$-like and $c_0$-like equivalence relations}
\author{Longyun Ding}
\address{School of Mathematical Sciences and LPMC, Nankai University, Tianjin, 300071, P.R.China}
\email{dinglongyun@gmail.com}
\thanks{Research partially supported by the National Natural Science Foundation of China (Grant No. 10701044).}

\subjclass[2000]{Primary 03E15, 54E35, 46A45}

\date{\today}

\begin{abstract}
Let $X$ be a Polish space, $d$ a pseudo-metric on $X$. If $\{(u,v):d(u,v)<\delta\}$ is ${\bf\Pi}^1_1$ for each $\delta>0$,
we show that either $(X,d)$ is separable or there are $\delta>0$ and a perfect set $C\subseteq X$
such that $d(u,v)\ge\delta$ for distinct $u,v\in C$.

Granting this dichotomy, we characterize the positions of $\ell_p$-like and $c_0$-like equivalence relations in the Borel reducibility hierarchy.
\end{abstract}
\maketitle

\section{Introduction}

Let $(X,d)$ be a pseudo-metric space. If $X$ is not separable, by Zorn's lemma, we can easily prove that, there are $\delta>0$ and a noncountable set $C\subseteq X$
such that $d(u,v)\ge\delta$ for distinct $u,v\in C$. However, if we do not assume CH, can we find such a $C$ whose cardinal is of $2^\Bbb N$? J. H. Silver \cite{silver}
answered a similar problem for equivalence relations under an extra assumption of coanalyticity.

A topological space is called a {\it Polish space} if it is separable and completely metrizable. As usual, We denote the Borel, analytic and coanalytic sets by
${\bf\Delta}^1_1,{\bf\Sigma}^1_1$ and ${\bf\Pi}^1_1$ respectively. For their effective analogues, the Kleene pointclasses and
the relativized Kleene pointclasses are denoted by $\Delta^1_1,\Sigma^1_1,\Pi^1_1,\Delta^1_1(\alpha),\Sigma^1_1(\alpha),\Pi^1_1(\alpha)$, etc.
For more details in descriptive set theory, one can see \cite{kechris} and \cite{MK}.

\begin{theorem}[Silver]
Let $E$ be a ${\bf\Pi}^1_1$ equivalence relation on a Polish space. Then $E$ has
either at most countably many or perfectly many equivalence classes.
\end{theorem}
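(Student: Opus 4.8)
The plan is to obtain Silver's theorem as an immediate consequence of the pseudo-metric dichotomy stated in the abstract (which I may assume). Given a ${\bf\Pi}^1_1$ equivalence relation $E$ on a Polish space $X$, I would introduce the $\{0,1\}$-valued function $d$ on $X\times X$ defined by $d(u,v)=0$ if $u\mathrel{E}v$ and $d(u,v)=1$ otherwise. This $d$ is a pseudo-metric: $d(u,u)=0$ by reflexivity, $d(u,v)=d(v,u)$ by symmetry, and the triangle inequality holds because if $d(u,w)=1$ then $u\not\mathrel{E}w$, so by transitivity $u\mathrel{E}v$ and $v\mathrel{E}w$ cannot both hold, whence $d(u,v)+d(v,w)\ge 1=d(u,w)$. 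Moreover $\{(u,v):d(u,v)<\delta\}$ equals $E$ when $0<\delta\le 1$ and equals $X\times X$ when $\delta>1$; in either case it is ${\bf\Pi}^1_1$, so the hypothesis of the dichotomy is met.

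Now apply the dichotomy to $(X,d)$. If $(X,d)$ is separable, fix a countable $d$-dense set $D\subseteq X$; for each $x\in X$ there is $y\in D$ with $d(x,y)<1$, forcing $d(x,y)=0$, i.e.\ $x\mathrel{E}y$, so $X=\bigcup_{y\in D}[y]_E$ and $E$ has at most countably many classes. In the other alternative there are $\delta>0$ and a perfect set $C\subseteq X$ with $d(u,v)\ge\delta$ for all distinct $u,v\in C$; since $d$ takes only the values $0$ and $1$ and $\delta>0$, this says $d(u,v)=1$, that is $u\not\mathrel{E}v$, for all distinct $u,v\in C$, so $C$ is a perfect set meeting each $E$-class in at most one point — which is by definition the assertion that $E$ has perfectly many classes. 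These are exactly the two alternatives of the theorem.

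Granting the abstract's dichotomy the reduction above has essentially no obstacle; the only points needing a line of care are the triangle inequality and the two translations carried out above. In effect this shows that Silver's theorem for ${\bf\Pi}^1_1$ equivalence relations is subsumed by the pseudo-metric dichotomy, so all the real (effective descriptive set theory) work is done there. For completeness I note the classical self-contained route, Harrington's, which could also be run directly here: relativize so that $E$ is lightface $\Pi^1_1$, let $U$ be the union of all $\Sigma^1_1$ sets meeting only countably many $E$-classes (this $U$ is again $\Sigma^1_1$, being a countable union, and still meets only countably many classes), and, if $U\ne X$, recursively build a Cantor scheme $(A_s)_{s\in 2^{<\omega}}$ of nonempty $\Sigma^1_1$ subsets of $X\setminus U$ with $A_{s0},A_{s1}\subseteq A_s$, usual-metric diameters tending to $0$, and $(A_{s0}\times A_{s1})\cap E=\emptyset$, extracting a branch point $x_z$ for each $z\in 2^\omega$ via the fact that the Gandy--Harrington topology is strong Choquet; then $\{x_z:z\in 2^\omega\}$ is a perfect set of pairwise $E$-inequivalent points. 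In that route the genuine difficulty is the splitting step producing $A_{s0},A_{s1}$, and it is there that coanalyticity of $E$ is used: the complement of $E$ is $\Sigma^1_1$, hence Gandy--Harrington open, and a Baire-category argument in that topology — using that no nonempty $\Sigma^1_1$ subset of $X\setminus U$ lies in a single $E$-class — delivers the two $E$-separated successors.
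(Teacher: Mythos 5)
Your proof is correct: the paper states Silver's theorem as a quoted classical result without proof, but your reduction via the $\{0,1\}$-valued pseudo-metric $d_E$ is exactly the construction the paper itself writes down at the start of Section 2 (there used in the opposite direction), and there is no circularity in deducing Silver's theorem from the paper's dichotomy, since Lemma 2.3 is proved directly by Harrington's Gandy--Harrington method --- which your closing sketch also describes accurately. The points needing care (the triangle inequality for $d_E$, the identification of $\{(u,v):d_E(u,v)<\delta\}$ with $E$ or $X\times X$, and the two translations of the alternatives) are all handled correctly.
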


In section 2, we use the Gandy-Harrington topology to establish the following dichotomy.

\begin{theorem}
Let $X$ be a Polish space, $d$ a pseudo-metric on $X$. If $\{(u,v):d(u,v)<\delta\}$ is ${\bf\Pi}^1_1$ for each $\delta>0$,
then either $(X,d)$ is separable or there are $\delta>0$ and a perfect set $C\subseteq X$
such that $d(u,v)\ge\delta$ for distinct $u,v\in C$.
\end{theorem}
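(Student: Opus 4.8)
The plan is to run a Silver-style dichotomy argument in the Gandy--Harrington topology, treating the family $\{(u,v):d(u,v)<\delta\}$ the way Harrington's proof of Theorem 1.1 treats a ${\bf\Pi}^1_1$ equivalence relation. First I would pass to the effective setting: there are only countably many rational $\delta>0$ and each $\{(u,v):d(u,v)<\delta\}$ is ${\bf\Pi}^1_1$, so there is a single real parameter relative to which all of them are $\Pi^1_1$; absorbing this parameter together with a recursive presentation of $X$, I may assume $X$ is a recursively presented Polish space and $\{d<\delta\}$ is $\Pi^1_1$ for every rational $\delta>0$. Then $\{d\le\delta\}=\bigcap_n\{d<\delta+1/n\}$ is $\Pi^1_1$, while $\{d>\delta\}$ and $\{d\ge\delta\}$ are $\Sigma^1_1$, and for a $\Sigma^1_1$ set $A$ with code $e$ the assertion ``$d(u,v)\le\delta$ for all $u,v\in A$'' (that is, $A\times A\subseteq\{d\le\delta\}$) is $\Pi^1_1$ in $e$. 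I will also use the elementary fact that $(X,d)$ is separable if and only if for every rational $\delta>0$ the space $X$ is a union of countably many sets of $d$-diameter $\le\delta$ (given a cover of $X$ into diameter-$\le1/n$ pieces, choosing one point from each nonempty piece and letting $n$ vary produces a countable $d$-dense set).

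Next I would introduce the invariant. For each rational $\delta>0$ let
\[
V_\delta=\bigcup\{A\subseteq X:\ A\text{ is }\Sigma^1_1\text{ and }d(u,v)\le\delta\text{ for all }u,v\in A\}.
\]
Since there are only countably many $\Sigma^1_1$ subsets of $X$ and the side condition above is $\Pi^1_1$ on codes, $V_\delta$ is $\Sigma^1_1$, and in fact $V_\delta$ is itself a union of countably many sets of $d$-diameter $\le\delta$; moreover $V_\delta$ increases with $\delta$. If $V_\delta=X$ for every rational $\delta>0$, then by the fact recalled above $(X,d)$ is separable and we are done. Otherwise $(X,d)$ is nonseparable, and since nonseparability is witnessed at a rational scale I may fix a rational $\delta_0>0$ for which $X$ is not a union of countably many sets of $d$-diameter $\le\delta_0$; then $P:=X\setminus V_{\delta_0}$ is a nonempty $\Pi^1_1$ set, and it is uncountable, for otherwise $X=V_{\delta_0}\cup P$ would present $X$ as such a countable union. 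The key local property of $P$ is: \emph{if $A$ is $\Sigma^1_1$ and $x\in A\cap P$, then there is $w\in A$ with $d(x,w)>\delta_0/2$}; indeed, otherwise $A\subseteq\{u:d(u,x)\le\delta_0/2\}$ would have $d$-diameter $\le\delta_0$ by the triangle inequality, so $A\subseteq V_{\delta_0}$, contradicting $x\in A\cap P$.

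With this in hand I would carry out the usual Gandy--Harrington Cantor-scheme construction relative to $P$. Writing $\tau$ for the Gandy--Harrington topology on $X$, one uses that $(X,\tau)$ is strong Choquet and that $P$ is $\tau$-closed to build $\Sigma^1_1$ sets $(A_s)_{s\in2^{<\omega}}$, each meeting $P$, with $A_{s0}\cup A_{s1}\subseteq A_s$, with diameters (in a fixed compatible complete metric on $X$) tending to $0$, and shrunk along plays of the strong Choquet game so that for each $\alpha\in2^\omega$ the intersection $\bigcap_nA_{\alpha|n}$ is a single point $p_\alpha$; then $C=\{p_\alpha:\alpha\in2^\omega\}$ is a perfect subset of $X$. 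At a splitting node $s$ we have $x_s\in A_s\cap P$ and, by the key property, some $w_s\in A_s$ with $d(x_s,w_s)>\delta_0/2$, and we pass to children $A_{s0},A_{s1}$ that $d$-separate the ``$x_s$-side'' from the ``$w_s$-side'' by at least $\delta_0/4$, using the triangle inequality. With $\delta:=\delta_0/4$ this forces $d(p_\alpha,p_\beta)\ge\delta$ whenever $\alpha\ne\beta$, so $C$ is a perfect set on which distinct points are at $d$-distance $\ge\delta$, which is the second alternative of the theorem.

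The main obstacle, and essentially the only nonroutine point, is the splitting step, where the pseudo-metric interacts badly with the Gandy--Harrington topology. The balls $\{u:d(u,x)<r\}$ and $\{u:d(u,x)\le r\}$ are ${\bf\Pi}^1_1$ rather than ${\bf\Sigma}^1_1$, so one cannot simply intersect $A_s$ with a small $d$-ball around $x_s$ and stay within the class of $\Sigma^1_1$ sets on which the Gandy--Harrington machinery operates; nor is it obvious that the ``far'' child meets $P$, since its natural witness $w_s$ need not lie in $P$. Overcoming this requires exploiting the precise form of $V_{\delta_0}$ --- for instance using that every $\Sigma^1_1$ subset of $A_s$ containing a point of $P$ already has $d$-diameter $>\delta_0$, so that the ``far from $x_s$'' alternative can be replaced by membership in a genuinely $\Sigma^1_1$ set while the complementary, $d$-small child retains the point $x_s\in P$ --- together with a reflection argument to keep both pieces $\Sigma^1_1$, and the Baire/strong-Choquet properties of $\tau$ to drive the scheme to convergence along every branch. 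Arranging this bookkeeping so that both children of every node are simultaneously $\Sigma^1_1$, meet $P$, and are $d$-separated is the crux; once it is set up, the verification that $C$ is perfect and $\delta$-discrete is standard.
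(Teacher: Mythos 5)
Your setup is on the right track and matches the paper's strategy in outline: pass to the lightface setting, isolate the set of points that have no ``small'' effectively definable neighborhood, observe that if this set is empty then $(X,d)$ is $\delta$-separable, and otherwise run a Gandy--Harrington argument to extract a perfect $d$-discrete set. But the proof has a genuine gap exactly where you flag ``the crux'': the splitting step of your Cantor scheme is not carried out, and the ingredient that makes it work is missing. The paper does \emph{not} split by intersecting with $d$-balls or by separating an ``$x_s$-side'' from a ``$w_s$-side''; instead it proves that the relation $R=\{(u,v):d(u,v)<\delta/2\}$ is \emph{meager} in $V\times V$ for the Gandy--Harrington topology. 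Concretely: if some vertical section $R(v)$ were non-meager in $V$, it would be comeager in a nonempty $\Sigma^1_1$ set $U\subseteq V$; Louveau's lemma then forces $U\times U\subseteq Q=\{d<\delta\}$ (any point of $\neg Q\cap(U\times U)$, a $\Sigma^1_1$ set, would have to meet $R(v)\times R(v)$, contradicting the triangle inequality), and $\Sigma^1_1$-separation then manufactures a $\Delta^1_1$ set $U_0$ with $u_0\in U_0\subseteq Q(u_0)$, contradicting $u_0\in V$. Kuratowski--Ulam (using that $\sigma({\bf\Sigma}^1_1)$ sets have the Baire property in $\tau\times\tau$, via Nikodym's theorem) upgrades this to meagerness of $R$ in $V\times V$, and then the Mycielski-type theorem for perfect strong Choquet spaces produces the perfect set avoiding $R$ off the diagonal. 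This is the step your proposal replaces with a description of what would be needed (``a reflection argument\dots the bookkeeping\dots is the crux''); without the meagerness of $R$, there is no way to thicken a single far-apart pair $(x_s,w_s)$ into a rectangle of nonempty $\Sigma^1_1$ sets whose product avoids $R$, since a nonempty $\Sigma^1_1$ subset of $X\times X$ need not contain a nonempty $\Sigma^1_1$ rectangle.

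Two smaller points. First, your $V_\delta$ is a union of $\Sigma^1_1$ sets indexed by a $\Pi^1_1$ condition on codes, so it is boldface ${\bf\Sigma}^1_1$ as a countable union but not obviously lightface $\Sigma^1_1$; the paper instead defines $V$ using $\Delta^1_1$ sets together with the $\Pi^1_1$ coding $(P^+,P^-,D)$ of $\Delta^1_1$ sets, which is what makes $V$ genuinely $\Sigma^1_1$, hence a legitimate Gandy--Harrington open set to localize in. Your $P=X\setminus V_{\delta_0}$ would be on the wrong side ($\tau$-closed rather than $\tau$-open), which compounds the difficulty at the splitting step. Second, the paper first proves a scale-by-scale dichotomy (for each $\delta$: either $\delta$-separable or a perfect $\delta/2$-discrete set) and deduces the theorem by choosing a $\delta$ at which separability fails; your reduction to a single rational scale $\delta_0$ accomplishes the same thing and is fine.
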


Let $X,Y$ be Polish spaces and
$E,F$ equivalence relations on $X,Y$ respectively. A {\it Borel
reduction} of $E$ to $F$  is a Borel function $\theta:X\to Y$ such
that $(x,y)\in E$ iff $(\theta(x),\theta(y))\in F$, for all $x,y\in
X$. We say that $E$ is {\it Borel reducible} to $F$, denoted $E\le_B
F$, if there is a Borel reduction of $E$ to $F$. If $E\le_B F$ and
$F\le_B E$, we say that $E$ and $F$ are {\it Borel bireducible} and
denote $E\sim_B F$. We refer to \cite{gao} for
background on Borel reducibility.

In section 3, we will introduce notions of $\ell_p$-like and $c_0$-like equivalence relations.
Granting the dichotomy on pseudo-metric spaces, we answer that when is $E_1$ Borel reducible to an $\ell_p$-like or a $c_0$-like equivalence relation.
In the end, we compare $\ell_p$-like and $c_0$-like equivalence relations with some remarkable equivalence
relations $E_0,E_1,E_0^\omega$.
\begin{enumerate}
\item[(a)] For $x,y\in 2^\Bbb N$, $(x,y)\in E_0\Leftrightarrow\exists m\forall
n\ge m(x(n)=y(n)).$
\item[(b)] For $x,y\in 2^{\Bbb N\times\Bbb N}$, $(x,y)\in
E_1\Leftrightarrow\exists m\forall n\ge m\forall k(x(n,k)=y(n,k)).$
\item[(c)] For $x,y\in 2^{\Bbb N\times\Bbb N}$, $(x,y)\in
E_0^\omega\Leftrightarrow\forall k\exists m\forall n\ge
m(x(n,k)=y(n,k)).$
\end{enumerate}

The following dichotomies show us why these equivalence relations are so remarkable.

\begin{theorem}
Let $E$ be a Borel equivalence relation. Then
\begin{enumerate}
\item[(a)] {\rm (Harrington-Kechris-Louveau \cite{HKL})} either $E\le_B{\rm
id}(\Bbb R)$ or $E_0\le_B E$;
\item[(b)] {\rm (Kechris-Louveau \cite{KL})} if $E\le_B E_1$, then
$E\le_B E_0$ or $E\sim_B E_1$;
\item[(c)] {\rm (Hjorth-Kechris \cite{HK})} if $E\le_B E_0^\omega$,
then $E\le_B E_0$ or $E\sim_B E_0^\omega$.
\end{enumerate}
\end{theorem}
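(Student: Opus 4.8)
Since the three parts are the celebrated Glimm--Effros ($E_0$), $E_1$, and $E_0^\omega$ dichotomies established respectively in \cite{HKL}, \cite{KL} and \cite{HK}, the plan is to recall their common engine --- the Gandy--Harrington topology combined with a Cantor-scheme (continuous-embedding) construction --- and then indicate the part-specific analysis. In each case one relativizes so that $E$ is $\Delta^1_1$ in a recursive parameter, equips the underlying recursively presented Polish space with the Gandy--Harrington topology $\tau$ generated by the $\Sigma^1_1$ sets, and exploits that $\tau$ is strong Choquet, so that every nonempty $\Sigma^1_1$ set is $\tau$-Baire. The dichotomy is then obtained by a category argument: either a definable reduction already exists on every $\Sigma^1_1$ piece (the ``structured'' side), or a splitting phenomenon is available which can be fed into a tree of $\Sigma^1_1$ sets to manufacture an embedding.

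For (a) I would first note that $E\le_B{\rm id}(\R)$ is equivalent to $E$ being smooth. Assuming $E$ is not smooth, the goal is to build a continuous injection $f:2^\N\to X$ that is a reduction of $E_0$ to $E$. I would construct a Cantor scheme $(U_s)_{s\in 2^{<\N}}$ of nonempty $\Sigma^1_1$ sets with $U_{s0},U_{s1}\subseteq U_s$ and $\tau$-diameters tending to $0$, together with partial $E$-preserving correspondences linking $U_{s0}$ and $U_{s1}$, so that each $x\in 2^\N$ determines the unique point $f(x)\in\bigcap_n U_{x|n}$. The correspondences force $f(x)\,E\,f(y)$ whenever $x\,E_0\,y$, while non-smoothness guarantees genuine separation of distinct $E_0$-classes, giving $f(x)\not E f(y)$ when $x\not E_0 y$. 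The main obstacle is maintaining these two opposing requirements simultaneously along the scheme; the resolution is that if the separation step ever failed on a $\Sigma^1_1$ piece, a Baire-category argument would yield a $\Delta^1_1$ reduction of $E$ to equality there and, by a reflection/exhaustion argument, everywhere, contradicting non-smoothness.

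For (b) and (c) the same machinery is applied to the product structure on $(2^\N)^\N$ underlying $E_1$ and $E_0^\omega$. Given a Borel reduction of $E$ into $E_1$ (resp. $E_0^\omega$), one analyzes how the reduction depends on the coordinate (column) structure, and the dichotomy becomes a stabilization phenomenon: either the reduction essentially factors through boundedly many coordinates up to a tail equivalence, which collapses $E$ to $E_0$ and yields $E\le_B E_0$; or it genuinely uses the whole $\omega$-indexed tail, in which case a continuous-embedding construction of the same Gandy--Harrington type recovers a copy of $E_1$ (resp. $E_0^\omega$) inside $E$, so that combining $E_1\le_B E$ (resp. $E_0^\omega\le_B E$) with the hypothesis gives $E\sim_B E_1$ (resp. $E\sim_B E_0^\omega$). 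Here the decisive and hardest step is again the conversion of the ``non-collapse'' alternative into an explicit embedding, together with the verification --- specific to $E_1$ in \cite{KL} and to $E_0^\omega$ in \cite{HK} --- that no relation strictly between $E_0$ and the target can survive, which relies on the rigidity of the tail product structure rather than on smoothness alone.
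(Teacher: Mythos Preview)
The paper does not prove this theorem at all: it is stated in the introduction purely as background, with each part attributed to the cited references \cite{HKL}, \cite{KL}, \cite{HK}, and is then used as a black box in Section~4. So there is no ``paper's own proof'' to compare your proposal against; the intended proof is simply the citation.

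That said, your outline is a fair high-level summary of the ideas in the cited papers for part~(a), but for parts~(b) and~(c) it is too schematic to count as a proof and in places misdescribes the actual arguments. The Kechris--Louveau $E_1$ dichotomy is not obtained by running the Gandy--Harrington machinery directly on a reduction into $E_1$; rather, one uses that any $E\le_B E_1$ is hypersmooth, i.e.\ an increasing union of smooth relations $E_n$, and the dichotomy comes from analyzing whether the $E_n$'s stabilize in a suitable measure-theoretic/category sense. Similarly, the Hjorth--Kechris $E_0^\omega$ dichotomy relies on the specific product-of-hyperfinite structure and turbulence-style arguments rather than a generic ``Cantor scheme in the Gandy--Harrington topology.'' Your phrase ``the reduction essentially factors through boundedly many coordinates up to a tail equivalence'' is not what happens in either proof, and would need substantial work to be made precise. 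If you were asked to supply a proof here, the correct move is what the paper does: cite the three sources.
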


\section{Separable or not}

For a ${\bf\Pi}^1_1$ equivalence relation $E$ on $X$, let us consider the following
pseudo-metric on $X$:
$$d_E(u,v)=\left\{\begin{array}{ll}0, & (u,v)\in E,\cr 1, &(u,v)\notin E.\end{array}\right.$$
From Silver's theorem, we can see that either $d_E$ is separable or there is a perfect set $C\subseteq X$ such that $d_E(u,v)=1$ for distinct $u,v\in C$.

By the same spirit of the Silver dichotomy theorem, we define:

\begin{definition}
Let $X$ be a Polish space, $d$ a pseudo-metric on $X$. If $\{(u,v):d(u,v)<\delta\}$ is ${\bf\Pi}^1_1$ for each $\delta>0$, we say $d$ is {\it lower} ${\bf\Pi}^1_1$.
\end{definition}

For a pseudo-metric space $(X,d)$ and $\delta>0$, we say $(X,d)$ is {\it $\delta$-separable} if there is a countable set $S\subseteq X$ such that
$$\forall u\in X\exists s\in S(d(u,s)<\delta).$$
Hence $(X,d)$ is separable iff it is $\delta$-separable for arbitrary $\delta>0$.

\begin{theorem} \label{separable}
Let $X$ be a Polish space, $d$ a lower ${\bf\Pi}^1_1$ pseudo-metric. Then for $\delta>0$, either $(X,d)$ is $\delta$-separable or there is a perfect set $C\subseteq X$
such that $d(u,v)\ge\delta/2$ for distinct $u,v\in C$.
\end{theorem}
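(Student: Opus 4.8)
The plan is to mimic the Gandy–Harrington proof of the Silver dichotomy, replacing the single equivalence relation by the uniform family of $\mathbf{\Pi}^1_1$ sets $R_\eta=\{(u,v):d(u,v)<\eta\}$. First I would relativize so that $d$ is \emph{lower} $\Pi^1_1$ (in a real parameter), pick a recursive presentation of $X$, and work with the Gandy–Harrington topology $\mathcal{G}$ on $X$, i.e.\ the topology generated by all $\Sigma^1_1$ sets; recall $(X,\mathcal{G})$ is a Baire space on the (comeager-in-$\mathcal G$, $\Sigma^1_1$) largest $\Sigma^1_1$ set with no nonempty $\Delta^1_1$ subset — call it $X_1$ — and that each $\Sigma^1_1$ set is "Gandy–Harrington clopen enough" for a Baire-category argument. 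The key dichotomy at the level of a single point is the following trichotomy-style observation: for the fixed $\delta$, consider whether there is \emph{some} $\delta/2$-ball (in the $d$-sense) that is "$\Sigma^1_1$-large", more precisely whether the $\Sigma^1_1$ set $X$ can be covered by countably many sets of $d$-diameter $<\delta$ each of which is relatively $\Delta^1_1$. If it can, then choosing one point from (a countable dense set of) each piece yields a countable $S$ witnessing $\delta$-separability, because lower $\Pi^1_1$-ness is exactly what lets us talk about "$d(u,s)<\delta$" inside the effective hierarchy. So the real content is the negation: if no such countable $\Delta^1_1$ cover exists, build the perfect set.

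For the perfect-set construction I would run the standard Cantor-scheme recursion inside the Gandy–Harrington topology. We build a Cantor scheme $(U_s)_{s\in 2^{<\omega}}$ of nonempty $\Sigma^1_1$ subsets of $X_1$ together with, for each $s$, a point hint, such that: (i) $U_{s^\frown i}\subseteq U_s$ and $\overline{U_{s^\frown i}}^{\,\text{(true topology)}}$ has true-topology diameter $\to 0$ down each branch (to get a genuine point of $X$ in the limit, using that the true Polish topology refines nothing but is refined \emph{by} $\mathcal G$ on the relevant pieces — here one uses the Gandy basis / the fact that nonempty $\Sigma^1_1$ sets meeting $X_1$ can be shrunk to have small true diameter); and crucially (ii) the "splitting" clause: $d(u,v)\ge\delta/2$ whenever $u\in U_{s^\frown 0}$ and $v\in U_{s^\frown 1}$. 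Clause (ii) is where the hypothesis "no countable $\Delta^1_1$ cover by $d$-small pieces" gets used: given a nonempty $\Sigma^1_1$ set $U_s$, if we could \emph{not} split it, that would mean $U_s$ is contained in a single $d$-ball of radius $\delta/2$ after intersecting with a suitable $\Delta^1_1$ set, and iterating a reflection/exhaustion argument over all of $X_1$ would produce the forbidden countable cover. To make clause (ii) precise and $\Sigma^1_1$-respecting, I would use that $\{(u,v):d(u,v)<\delta\}$ being $\mathbf{\Pi}^1_1$ means its complement $\{d(u,v)\ge\delta\}$ is $\mathbf{\Sigma}^1_1$, so "there exist $u\in U_s$, $v\in U_s$ with $d(u,v)\ge\delta/2$ \emph{witnessed by} an actual pair" can be turned into a nonempty $\Sigma^1_1$ set of pairs, whose two coordinate projections give the two children $U_{s^\frown 0},U_{s^\frown 1}$; the triangle inequality then upgrades a single $\delta/2$-separated pair across the two children into full $\delta/2$-separation between the children (after one further shrinking so that each child has true-diameter $<\delta/2$ — wait, we only get $\ge\delta-\text{two small errors}$, so shrinking each child to $d$-diameter, not just true-diameter, less than $\delta/4$ gives the clean bound $\ge\delta/2$; here lower $\Pi^1_1$-ness again lets us intersect with a $\Delta^1_1$ set of small $d$-diameter while staying $\Sigma^1_1$ and nonempty, by Gandy basis plus the separation of $X_1$).

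Finally, the branches of the scheme give a continuous injection $2^\omega\to X$ (injectivity from (ii), continuity and well-definedness from (i)), whose image $C$ is the desired perfect set with $d(u,v)\ge\delta/2$ for distinct $u,v\in C$; strictly one takes the closure/range to ensure $C$ is perfect, noting $\ge\delta/2$ passes to limits since $\{d\ge\delta/2\}$ is closed in the true topology (as $\{d<\delta/2\}$ is $\mathbf{\Pi}^1_1$, hence its complement, being $d$-open... — more simply, $d$ is continuous enough on $C$ because $C$ is built from a Cantor scheme of true-diameter $\to0$, so the bound on the dense set of branch-tips propagates). The main obstacle I anticipate is getting clause (ii) to cooperate with the Gandy–Harrington Baire-category machinery \emph{uniformly in $\delta$} while only assuming lower $\mathbf{\Pi}^1_1$ (so that $\{d<\delta\}$ is $\mathbf{\Pi}^1_1$ but $\{d<\eta\}$ for $\eta<\delta$ is also only $\mathbf{\Pi}^1_1$, not $\mathbf{\Delta}^1_1$) — in particular, justifying the exhaustion that turns "unsplittable" into a countable $\Delta^1_1$ cover requires a careful reflection argument, and it is exactly this trade-off that forces the conclusion to be $\delta/2$ rather than $\delta$.
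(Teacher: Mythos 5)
Your overall strategy (relativize, work in the Gandy--Harrington topology, split into a ``countable $\Delta^1_1$ cover'' case giving $\delta$-separability and a ``bad set'' case on which one builds a perfect $\delta/2$-separated set) is the right general shape, and your positive case is essentially sound. But the heart of the matter --- securing clause (ii) of your Cantor scheme, i.e.\ \emph{full} $\delta/2$-separation between the two children of a node --- has a genuine gap. You propose to find one $\Sigma^1_1$-witnessed pair $(u_0,v_0)$ with $d(u_0,v_0)\ge\delta$ and then propagate separation by the triangle inequality after shrinking each child to $d$-diameter $<\delta/4$, the shrinking being done by ``intersecting with a $\Delta^1_1$ set of small $d$-diameter.'' This is circular: a point that lies in a $\Delta^1_1$ set of $d$-diameter $<\delta$ is exactly a point of the \emph{separable} side of the dichotomy. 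The set on which the perfect-set construction must take place is (some nonempty $\Sigma^1_1$ subset of) the set $V$ of points admitting \emph{no} $\Delta^1_1$ neighborhood contained in their $d$-ball of radius $\delta$; on $V$ the small-$d$-diameter $\Delta^1_1$ sets you want to intersect with simply do not exist, and since $\{d<\eta\}$ is only ${\bf\Pi}^1_1$ you cannot replace them by $\Sigma^1_1$ balls either. So the single separated pair cannot be upgraded to separation of the two children by this route, and the splitting clause is never actually achieved. A secondary, smaller gap is that you only gesture at the ``reflection/exhaustion'' turning \emph{unsplittable} into a countable cover; making this precise requires the coding of $\Delta^1_1$ sets to see that $V$ is $\Sigma^1_1$, plus ${\bf\Sigma}^1_1$-separation applied to the ${\bf\Pi}^1_1$ set $W=\{w:\forall u\in U\,(d(u,w)<\delta)\}$, and this is not optional bookkeeping --- it is where the hypothesis is actually used.

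The paper closes exactly this gap by a Baire-category argument instead of a hand-built splitting step: it shows that for each $v\in V$ the set $R(v)=\{u:d(u,v)<\delta/2\}$ is meager in $V$ for the Gandy--Harrington topology $\tau$ (if not, Louveau's lemma produces a nonempty $\Sigma^1_1$ set $U$ with $U\times U\subseteq\{d<\delta\}$, and the separation/reflection step above contradicts $U\subseteq V$), then applies Kuratowski--Ulam to conclude that $R=\{d<\delta/2\}$ is $\tau\times\tau$-meager in $V\times V$, and finally invokes the Mycielski-type theorem for the strong Choquet space $(V,\tau)$ (\cite{kechris}, Exercise 19.5) to extract a perfect set avoiding $R$ off the diagonal. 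In that framework the ``splitting'' is achieved by avoiding one closed nowhere dense piece of $R$ at each stage --- an open condition that never requires small-$d$-diameter neighborhoods --- which is precisely the device your construction is missing. If you want to salvage your write-up, the cleanest fix is to replace your clause (ii) mechanism by first proving the meagerness of $R$ in $V\times V$ and then running the Cantor scheme against a countable family of nowhere dense sets covering $R$.
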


\begin{proof}
We denote $Q=\{(u,v):d(u,v)<\delta\}$ and $R=\{(u,v):d(u,v)<\delta/2\}$. We see that both $Q,R$ are ${\bf\Pi}^1_1$.

Then the theorem follows from the next lemma.
\end{proof}

\begin{lemma}
Let $X$ be a Polish space, $Q,R\subseteq X^2$. Assume that
\begin{enumerate}
\item[(i)] $Q$ is ${\bf\Pi}^1_1$ and $R$ is $\sigma({\bf\Sigma}^1_1)$ (the $\sigma$-algebra generated by the ${\bf\Sigma}^1_1$ sets);
\item[(ii)] $\Delta(X)=\{(u,u):u\in X\}$ contains in $Q$;
\item[(iii)] if there exists $v\in X$ such that $(v,u)\in R,(v,w)\in R$, then $(u,w)\in Q$.
\end{enumerate}
Then one of the following holds:
\begin{enumerate}
\item[(a)] there is a countable set $S\subseteq X$ such that $\forall u\in X\exists s\in S((u,s)\in Q)$;
\item[(b)] there is a perfect set $C\subseteq X$ such that $(u,v)\notin R$ for distinct $u,v\in C$.
\end{enumerate}
\end{lemma}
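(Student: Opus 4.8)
The plan is to adapt the effective (Gandy--Harrington) proof of Silver's dichotomy. First I fix $\alpha\in\N^\N$ with $Q\in\Pi^1_1(\alpha)$ and $R$ in the $\sigma$-algebra generated by the $\Sigma^1_1(\alpha)$ sets, and pass to the Gandy--Harrington topology $\tau$ on $X$ (generated by the $\Sigma^1_1(\alpha)$ subsets of $X$) and to the analogous topology on $X^2$. I will use the standard facts that these topologies are strong Choquet (hence Baire), that $\Pi^1_1(\alpha)$ sets are $\tau$-closed, that there are only countably many $\Sigma^1_1(\alpha)$ subsets of $X$ (so that a $\tau$-open set is automatically $\Sigma^1_1(\alpha)$, and a countable union of $\Sigma^1_1(\alpha)$ sets is $\Sigma^1_1(\alpha)$), and, consequently, that $R$ has the Baire property with respect to the Gandy--Harrington topology on $X^2$.

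Call $A\subseteq X$ \emph{$R$-centered} if $A\subseteq\{u:(v,u)\in R\}$ for some $v\in X$; by (iii), if $A$ is $R$-centered and $s\in A$ then $(u,s)\in Q$ for every $u\in A$. Let $U$ be the union of all those $\Sigma^1_1(\alpha)$ sets that can be written as a countable union of $R$-centered sets. Since there are only countably many $\Sigma^1_1(\alpha)$ sets, $U$ is $\Sigma^1_1(\alpha)$ and is itself a countable union of $R$-centered sets. If $U=X$, then choosing a point in each $R$-centered piece and applying (iii) yields a countable $S$ with $\forall u\,\exists s\in S\,((u,s)\in Q)$, i.e.\ alternative (a); the same holds if $X\setminus U$ happens to be countable, using (ii) to absorb those points into $S$. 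So I may assume that $Y:=X\setminus U$ is uncountable. It is a nonempty $\Pi^1_1(\alpha)$ set, hence $\tau$-closed, hence strong Choquet as a subspace.

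Next I build, by recursion on $2^{<\omega}$, a Cantor scheme of nonempty $\Sigma^1_1(\alpha)$ sets $(N_s)_{s\in 2^{<\omega}}$ with $N_{s0},N_{s1}\subseteq N_s$, with $\mathrm{cl}(N_{s0})\cap\mathrm{cl}(N_{s1})=\emptyset$ and $\diam(N_s)<2^{-|s|}$ in a fixed complete metric on $X$, with $N_s\cap Y\neq\emptyset$ for all $s$, and --- this is what will deliver (b) --- with $(N_s\cap Y)\times(N_t\cap Y)\cap R=\emptyset$ for distinct $s,t\in 2^n$; simultaneously I arrange that along every branch the sets $N_{\sigma\restriction n}\cap Y$ (together with marked points) constitute a run of the strong Choquet game on $Y$ in which the nonempty player follows a fixed winning strategy. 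Then for each $\sigma\in 2^\omega$ the set $\bigcap_n(N_{\sigma\restriction n}\cap Y)$ is a singleton $\{x_\sigma\}\subseteq Y$, the map $\sigma\mapsto x_\sigma$ is a topological embedding of $2^\omega$ into $X$, and $(x_\sigma,x_\tau)\notin R$ whenever $\sigma\neq\tau$, so $C=\{x_\sigma:\sigma\in 2^\omega\}$ is the required perfect set.

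The crux, and the step I expect to be the main obstacle, is the recursion step: given a nonempty $\Sigma^1_1(\alpha)$ set $N$ with $N\cap Y\neq\emptyset$, I must find $N_0,N_1\subseteq N$ fulfilling the above. The leverage is that such an $N$ is \emph{not} a countable union of $R$-centered sets (else $N\subseteq U$): indeed, fixing $z\in N\cap Y$, the set $N\cap\{u:(z,u)\in R\}$ is $R$-centered and $N\cap U$ is a countable union of $R$-centered sets, so if every point of $N\cap Y$ were $R$-related to $z$ we would have written $N$ as a countable union of $R$-centered sets; hence there are $u_0,u_1\in N\cap Y$ with $(u_0,u_1)\notin R$ (and, $R$ being symmetric in the application, with $(u_1,u_0)\notin R$ as well). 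It remains to peel off Gandy--Harrington neighbourhoods of $u_0$ and $u_1$ inside $N$, shrunk so that their product stays off $R$ and so that $N_0\cap Y$ and $N_1\cap Y$ remain nonempty. When $R$ is $\Pi^1_1$ --- the only case needed for Theorem~\ref{separable} --- the complement $R^c$ is $\Sigma^1_1$, hence Gandy--Harrington-open in $X^2$, and one carries along, for each pair $s\neq t$, an auxiliary nonempty $\Sigma^1_1(\alpha)$ subset of $R^c$ whose coordinate projections feed the construction of the $N_s$; for a general $\sigma(\Sigma^1_1)$ relation one instead exploits the Baire property of $R$ on $X^2$ together with a Banach--Mazur-style diagonalisation to force the limit pairs out of $R$. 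Checking that all the constraints --- the Polish Cantor-scheme conditions, the strong Choquet strategy on $Y$, the auxiliary construction on $X^2$, and the bookkeeping that keeps each $N_s$ meeting $Y$ --- can be met together is the technical heart of the argument.
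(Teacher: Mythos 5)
Your overall architecture (Gandy--Harrington topology, a dichotomy between a countable $Q$-net and a perfect $R$-discrete set inside a ``bad'' set, then a Cantor-scheme construction) is the right shape, and your alternative (a) goes through. But the proposal has two genuine gaps, and they sit exactly where the paper's proof does its real work. First, your bad set $Y=X\setminus U$ is $\tau$-\emph{closed}, whereas the whole point of Harrington's device (and of the paper's coding-of-$\Delta^1_1$-sets step) is to arrange the bad set to be $\Sigma^1_1$, hence $\tau$-\emph{open}, so that the strong Choquet/Baire machinery restricts to it. The strong Choquet property is inherited by open and $G_\delta$ subspaces (one intersects the moves with the $G_\delta$ witnesses), but not, in a non-metrizable space like $(\N^\N,\tau)$, by closed subspaces: the point produced by the winning strategy is built by an explicit tree argument and there is no way to force it into a given $\Pi^1_1$ set, so ``$Y$ is $\tau$-closed, hence strong Choquet as a subspace'' cannot simply be cited. (A side error in the same step: the lightface class $\Sigma^1_1(\alpha)$ is \emph{not} closed under arbitrary countable unions --- only under unions taken over a $\Pi^1_1$ set of codes --- so $U$ need not be $\Sigma^1_1(\alpha)$; it is $\tau$-open, but that is all you get.)

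Second, and more seriously, the splitting step of your recursion is not supplied for the case actually claimed in the lemma, namely $R\in\sigma({\bf\Sigma}^1_1)$. Your definition of $Y$ yields only a pointwise statement: for each nonempty $\Sigma^1_1$ set $N$ meeting $Y$ and each $z\in N\cap Y$ there is $u\in N\cap Y$ with $(z,u)\notin R$. To run a Banach--Mazur/Mycielski diagonalisation that forces \emph{all} limit pairs out of $R$, you need $R$ to be $\tau\times\tau$-\emph{meager} on the square of the bad set; a set with the Baire property that is non-meager is comeager on some basic open box, and there the diagonalisation fails. Establishing this meagerness is the crux of the paper's argument: one shows that each section $R(v)$ is $\tau$-meager in the ($\Sigma^1_1$) bad set $V$ --- if not, $R(v)$ is comeager in some nonempty $\Sigma^1_1$ set $U\subseteq V$, Louveau's lemma plus hypothesis (iii) forces $U\times U\subseteq Q$, and then ${\bf\Sigma}^1_1$-separation produces a $\Delta^1_1$ set $U_0$ with $u_0\in U_0\subseteq Q(u_0)$, contradicting the definition of $V$ --- and then Kuratowski--Ulam gives meagerness of $R$ in $V\times V$. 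Nothing in your setup substitutes for this step, and your text explicitly defers it (``the technical heart''). A smaller point in the same vein: the lemma does not assume $R$ symmetric, so your splitting must avoid both $(u_0,u_1)\in R$ and $(u_1,u_0)\in R$; the meagerness route handles this automatically ($R\cup R^{-1}$ is still meager), while your pointwise extraction only controls one order.
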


\begin{proof}
We follow the method as in Harrington's proof for Silver's theorem.

Without loss of generality we may assume $X=\Bbb N^\Bbb N$ and $Q\in\Pi^1_1$. The proof for $Q\in\Pi^1_1(\alpha)$ with $\alpha\in\Bbb N^\Bbb N$ is similar.
Let $\tau$ be the Gandy-Harrington topology (the topology generated by all $\Sigma^1_1$ sets) on $\Bbb N^\Bbb N$.

For $u\in X$ we denote $Q(u)=\{v\in X:(u,v)\in Q\}$. First we define
$$V=\{u\in X:\mbox{ there is no $\Delta^1_1$ set $U$ such that }u\in U\subseteq Q(u)\}.$$

If $V=\emptyset$, since there are only countably many $\Delta^1_1$ set, we can find a countable subset $S\subseteq X$ which meets every nonempty $\Delta^1_1$ set
at least one point. For each $u\in X$ there is a nonempty $\Delta^1_1$ set $U\subseteq Q(u)$. Let $s\in S\cap U$. Then $s\in Q(u)$, i.e. $(u,s)\in Q$.

For the rest of the proof we assume $V\ne\emptyset$. Note that

$$u\in V\iff\forall U\in\Delta^1_1(u\in U\to\exists v\in U(v\notin Q(u))).$$
With the coding of $\Delta^1_1$ sets (see \cite{gao} Theorem 1.7.4), there are $\Pi^1_1$ subsets $P^+,P^-\subseteq\Bbb N\times\Bbb N^\Bbb N$ and $D\subseteq\Bbb N$ such that
\begin{enumerate}
\item $\forall n\in D\forall u((n,u)\in P^+\Leftrightarrow(n,u)\notin P^-)$;
\item for any $\Delta^1_1$ set $A$ there is $n\in D$ such that $\forall u(u\in A\Leftrightarrow(n,u)\in P^+)$.
\end{enumerate}
Thus we have
$$u\in V\iff\forall n((n\in D,(n,u)\in P^+)\to\exists v((n,v)\notin P^-,(u,v)\notin Q)).$$
So $V$ is $\Sigma^1_1$.

By a theorem of Nikodym (see \cite{kechris} Corollary 29.14), the class of sets with the Baire property in any topological space is closed under the
Suslin operation. It is well known that all ${\bf\Sigma}^1_1$ sets are results of the Suslin operation applied on closed sets in the usual topology
(see \cite{kechris} Theorem 25.7). Note that all closed sets in usual topology are also closed in $\tau$, we see that every $\sigma({\bf\Sigma}^1_1)$
subset of $\Bbb N^\Bbb N$ (or $\Bbb N^\Bbb N\times\Bbb N^\Bbb N$) has Baire property in $\tau$ (or $\tau\times\tau$).

Toward a contradiction assume that for some $v\in V$, $R(v)$ is not $\tau$-meager in $V$. Since $R(v)$ has Baire property in $\tau$,
there is a nonempty $\Sigma^1_1$ set $U\subseteq V$
such that $R(v)$ is $\tau$-comeager in $U$. By Louveau's lemma (see \cite{MK} Lemma 9.3.2), $R(v)\times R(v)$ meets any nonempty $\Sigma^1_1$ set in $U\times U$.
We denote $\neg Q=(\Bbb N^\Bbb N\times\Bbb N^\Bbb N)\setminus Q$. If $\neg Q\cap(U\times U)\ne\emptyset$, since it is $\Sigma^1_1$, we have
$$(R(v)\times R(v))\cap\neg Q\cap(U\times U)\ne\emptyset,$$
which contradicts to clause (iii). Thus we have $U\times U\subseteq Q$. We define $W$ by
$$w\in W\iff\forall u(u\in U\to(u,w)\in Q).$$
Fix a $u_0\in U$. We can see that $W$ is $\Pi^1_1$ and $U\subseteq W\subseteq Q(u_0)$. By the separation property for $\Sigma^1_1$ sets there is $U_0\in\Delta^1_1$
such that $U\subseteq U_0\subseteq W$. Then we have $u_0\in U_0\subseteq Q(u_0)$, which contradicts $u_0\in U\subseteq V$. Therefore, $R(v)$ is $\tau$-meager in $V$ for
each $v\in V$.

Since $R$ has Baire property in $\tau\times\tau$, by the Kuratowski-Ulam theorem (see \cite{kechris} Theorem 8.41), $R$ is $\tau\times\tau$-meager in $V\times V$.
By the definition of $V$ and clause (ii),
we see that $V$ contains no $\Delta^1_1$ real, i.e. $V$ has no isolate point in $\tau$. Since the space $\Bbb N^\Bbb N$ with $\tau$ is strong Choquet
(see \cite{gao} Theorem 4.1.5), $V$ is a perfect Choquet space. From \cite{kechris} Exercise 19.5, we can find a perfect set $C\subseteq V$ such that
$(u,v)\notin R$ for distinct $u,v\in C$.
\end{proof}

\section{Characterization}

The notion of $\ell_p$-like equivalence relation was introduced in \cite{ding1}.

\begin{definition}
Let $(X_n,d_n),\,n\in\Bbb N$ be a sequence of pseudo-metric spaces, $p\ge 1$. We define an equivalence relation
$E((X_n,d_n)_{n\in\Bbb N};p)$ on $\prod_{n\in\Bbb N}X_n$ by
$$(x,y)\in E((X_n,d_n)_{n\in\Bbb N};p)\iff\sum_{n\in\Bbb
N}d_n(x(n),y(n))^p<+\infty$$ for $x,y\in\prod_{n\in\Bbb N}X_n$.
We call it an {\it $\ell_p$-like equivalence relation}. If $(X_n,d_n)=(X,d)$ for every $n\in\Bbb N$, we write
$E((X,d);p)=E((X_n,d_n)_{n\in\Bbb N};p)$ for the sake of brevity.
\end{definition}

If $X$ is a separable Banach space, we have $E(X;p)=X^\Bbb N/\ell_p(X)$ where $\ell_p(X)$ is the Banach space whose underlying space is
$\{x\in X^\Bbb N:\sum_{n\in\Bbb N}\|x(n)\|^p<+\infty\}$ with the norm $\|x\|=\left(\sum_{n\in\Bbb N}\|x(n)\|^p\right)^{\frac{1}{p}}$.
Then $E(X;p)$ is an orbit equivalence relation induced by a Polish group action, thus $E_1\not\le_B E(X;p)$ (see \cite{gao} Theorem 10.6.1).

Let $(X,d)$ be a pseudo-metric space, we denote
$$\delta(X)=\inf\{\delta:X\mbox{ is $\delta$-separable}\}.$$

\begin{theorem} \label{lp}
Let $X_n,\,n\in\Bbb N$ be a sequence of Polish spaces, $d_n$ a Borel pseudo-metric on $X_n$ for each $n$ and $p\ge 1$. Denote $E=E((X_n,d_n)_{n\in\Bbb N};p)$.
We have
\begin{enumerate}
\item[(i)] $\sum_{n\in\Bbb N}\delta(X_n)^p<+\infty\iff E\le_B E(c_0;p)$;
\item[(ii)] $\sum_{n\in\Bbb N}\delta(X_n)^p=+\infty\iff E_1\le_B E$.
\end{enumerate}
\end{theorem}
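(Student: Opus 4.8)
The plan is to prove the two ``constructive'' implications---that $\sum_n\delta(X_n)^p<+\infty$ implies $E\le_B E(c_0;p)$, and that $\sum_n\delta(X_n)^p=+\infty$ implies $E_1\le_B E$---and to obtain the remaining two for free. Indeed $c_0$ is a separable Banach space, so, as recalled just before the theorem, $E_1\not\le_B E(c_0;p)$; hence $E\le_B E(c_0;p)$ excludes $\sum_n\delta(X_n)^p=+\infty$ (else $E_1\le_B E\le_B E(c_0;p)$), and symmetrically $E_1\le_B E$ excludes $\sum_n\delta(X_n)^p<+\infty$. In both constructive halves we may assume each $d_n\le1$, since replacing $d_n$ by $\min(d_n,1)$ does not change $E$; then $\delta(X_n)\le1$ for all $n$.

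For the first implication the idea is to compress each $X_n$ onto a countable piece and then embed the pieces into $c_0$. Pick $\delta_n>\delta(X_n)$ with $\delta_n\to0$ and $\sum_n\delta_n^p<+\infty$; since $X_n$ is $\delta_n$-separable, fix a sequence $(s_{n,k})_k$ with $\forall u\in X_n\,\exists k\,(d_n(u,s_{n,k})<\delta_n)$ and let $\pi_n\colon X_n\to S_n:=\{s_{n,k}:k\in\Bbb N\}$ send $u$ to $s_{n,k}$ for the least such $k$, so that $\pi_n$ is Borel and $d_n(u,\pi_n(u))<\delta_n$. Since $\abs{d_n(x(n),y(n))-d_n(\pi_n(x(n)),\pi_n(y(n)))}<2\delta_n$ and $\sum_n\delta_n^p<+\infty$, the inequality $(a+b)^p\le2^{p-1}(a^p+b^p)$ shows that $\sum_nd_n(x(n),y(n))^p<+\infty$ iff $\sum_nd_n(\pi_n(x(n)),\pi_n(y(n)))^p<+\infty$ for all $x,y$; thus $x\mapsto(\pi_n(x(n)))_n$ Borel-reduces $E$ to $E((S_n,d_n|_{S_n});p)$, every factor of which is a countable, hence separable, pseudo-metric space of diameter $\le1$. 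It remains to reduce this relation to $E(c_0;p)$ coordinatewise. By Aharoni's theorem every separable metric space bi-Lipschitz embeds into $c_0$ with distortion at most a universal constant $C$, so, after passing to the metric quotient and rescaling, one gets $e_n\colon S_n\to c_0$ with $C^{-1}d_n(s,s')\le\norm{e_n(s)-e_n(s')}_\infty\le d_n(s,s')$; these are Borel since $S_n$ is discrete, and $y\mapsto(e_n(y(n)))_n$ reduces $E((S_n,d_n|_{S_n});p)$ to $E(c_0;p)$. Composing gives $E\le_B E(c_0;p)$.

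For the second implication the idea is to encode the rows of $2^{\Bbb N\times\Bbb N}$ into consecutive blocks of coordinates, extracting from each non-separable factor a Cantor set on which $d_n$ is bounded below. Each $d_n$ is Borel, hence lower ${\bf\Pi}^1_1$; for $n$ with $\delta(X_n)>0$, $X_n$ is not $(\delta(X_n)/2)$-separable, so Theorem~\ref{separable} yields a perfect $C_n\subseteq X_n$ with $d_n(u,v)\ge\delta(X_n)/4$ for distinct $u,v\in C_n$, and since a non-empty perfect Polish space contains a homeomorphic copy of $2^{\Bbb N}$ we obtain a continuous injection $\psi_n\colon2^{\Bbb N}\to C_n$ with $\delta(X_n)/4\le d_n(\psi_n(u),\psi_n(v))\le1$ for $u\ne v$; for $n$ with $\delta(X_n)=0$ take $\psi_n$ constant. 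As $\sum_n(\delta(X_n)/4)^p=+\infty$, split $\Bbb N$ into consecutive finite blocks $B_0,B_1,\dots$ with $\sum_{n\in B_j}(\delta(X_n)/4)^p\ge1$ for every $j$. Writing $x(j,\cdot)\in2^{\Bbb N}$ for the $j$-th section of $x\in2^{\Bbb N\times\Bbb N}$, define $\theta\colon2^{\Bbb N\times\Bbb N}\to\prod_nX_n$ by $\theta(x)(n)=\psi_n(x(j,\cdot))$ for $n\in B_j$; this is Borel. For each $j$, the block $B_j$ contributes $0$ to $\sum_nd_n(\theta(x)(n),\theta(y)(n))^p$ if $x(j,\cdot)=y(j,\cdot)$, and a number in $[1,\abs{B_j}]$ if $x(j,\cdot)\ne y(j,\cdot)$; hence this sum is finite exactly when $\{j:x(j,\cdot)\ne y(j,\cdot)\}$ is finite, i.e.\ exactly when $(x,y)\in E_1$. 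So $\theta$ reduces $E_1$ to $E$.

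The step I expect to be hardest is landing in $c_0$ rather than merely $\ell_\infty$ in the first implication: the Kuratowski-type map $u\mapsto(d_n(u,s_{n,k}))_k$ has exactly the right metric distortion but need not take values in $c_0$, and the obvious remedies---truncating or reweighting coordinates---destroy the lower metric bound. Compressing $X_n$ onto a countable $\delta_n$-dense set first (absorbing the error via $\sum_n\delta_n^p<+\infty$) and only then invoking a genuine $c_0$-embedding theorem for separable metric spaces circumvents this; some care is also needed with the normalisation $d_n\le1$ and with the meaning of $\delta(X_n)$ when $d_n$ is unbounded. The second implication is comparatively routine once Theorem~\ref{separable} is in hand, its only non-elementary ingredient being that a non-empty perfect Polish space contains a Cantor set.
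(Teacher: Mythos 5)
Your proposal is correct and follows essentially the same route as the paper: reduce the biconditionals to the two constructive implications via $E_1\not\le_B E(c_0;p)$, prove (i) by compressing each coordinate onto a countable $\delta_n$-net with $\sum_n\delta_n^p<+\infty$ and then applying Aharoni's embedding into $c_0$, and prove (ii) by extracting perfect $\delta$-separated sets from Theorem \ref{separable} and coding the rows of $E_1$ into finite blocks of coordinates whose contributions are bounded below by $1$. The minor variations (normalising $d_n\le 1$, choosing $\delta(X_n)/2$ instead of an auxiliary sequence $\delta_n<\delta(X_n)$, and placing the Lipschitz constant on the lower rather than upper bound) are immaterial.
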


\begin{proof}
Because $E_1\not\le_B E(c_0;p)$, we only need to prove ($\Rightarrow$) for (i) and (ii).

(i) By the definition of $\delta(X_n)$, we see that $X_n$ is $(\delta(X_n)+2^{-n})$-separable, i.e. there is a countable set $S_n\subseteq X_n$ such that
$$\forall u\in X_n\exists s\in S_n(d_n(u,s)<\delta(X_n)+2^{-n}).$$
Let $S_n=\{s_m^n:m\in\Bbb N\}$. Without loss of generality, we assume that $d_n(s_k^n,s_l^n)>0$ for $k\ne l$, i.e. $d_n$ is a metric on $S_n$.
For $u\in X$ we denote $m(u)$ the least $m$ such that $d(u,s_m^n)<\delta(X_n)+2^{-n}$. Then we define $h_n:X_n\to S_n$ by
$h_n(u)=s_{m(u)}^n$ for $u\in X$. It is easy to see that $h_n$ is Borel. Define $\theta:\prod_{n\in\Bbb N}X_n\to\prod_{n\in\Bbb N}S_n$ by
$$\theta(x)(n)=h_n(x(n))$$
for $x\in\prod_{n\in\Bbb N}X_n$. Note that for each $x$ we have
$$\sum_{n\in\Bbb N}d_n(x(n),\theta(x)(n))^p<\sum_{n\in\Bbb N}(\delta(X_n)+2^{-n})^p\le2^{p-1}\sum_{n\in\Bbb N}(\delta(X_n)^p+2^{-np})<+\infty,$$
i.e. $(x,\theta(x))\in E$. It follows that $(x,y)\in E\Leftrightarrow(\theta(x),\theta(y))\in E$. Hence $E\le_B E((S_n,d_n)_{n\in\Bbb N};p)$.

Note that each $(S_n,d_n)$ is a separable metric space. From Aharoni's theorem \cite{aharoni}, there are $K>0$ and $T_n:S_n\to c_0$ satisfying
$$d_n(u,v)\le\|T_n(u)-T_n(v)\|_{c_0}\le Kd_n(u,v)$$
for every $u,v\in S_n$. Define $\theta_1:\prod_{n\in\Bbb N}S_n\to c_0^\Bbb N$ by
$$\theta_1(x)(n)=T_n(x(n))$$
for $x\in\prod_{n\in\Bbb N}S_n$. It is easy to check that $\theta_1$ is a Borel reduction of $E((S_n,d_n)_{n\in\Bbb N};p)$ to $E(c_0;p)$.

(ii) Without loss of generality, we may assume that $\delta(X_n)>0$ for each $n$. Select a sequence $0<\delta_n<\delta(X_n),\,n\in\Bbb N$ such that
$\sum_{n\in\Bbb N}\delta_n^p=+\infty$. Thus we can find a strictly increasing sequence of natural numbers $(n_j)_{j\in\Bbb N}$ such that $n_0=0$ and
$$\sum_{n=n_j}^{n_{j+1}-1}\delta_n^p\ge 2^p,\quad j=0,1,2,\cdots.$$

Since $(X_n,d_n)$ is not $\delta_n$-separable, from Theorem \ref{separable}, there is a Borel injection
$g_n:2^\Bbb N\to X_n$ such that $d_n(g_n(\alpha),g_n(\beta))\ge\delta_n/2$ for distinct $\alpha,\beta\in 2^\Bbb N$.
Define $\vartheta:(2^\Bbb N)^\Bbb N\to\prod_{n\in\Bbb N}X_n$ by
$$\vartheta(x)(n)=g_n(x(j)),\quad n_j\le n<n_{j+1}.$$
For each $x,y\in(2^\Bbb N)^\Bbb N$, if $x(j)\ne y(j)$ for some $j\in\Bbb N$, we have
$$\sum_{n=n_j}^{n_{j+1}-1}\!\!\!d_n(\vartheta(x)(n),\vartheta(y)(n))^p=\!\!\!\sum_{n=n_j}^{n_{j+1}-1}\!\!d_n(g_n(x(j)),g_n(y(j)))^p
\ge\!\!\!\sum_{n=n_j}^{n_{j+1}-1}\!\!\!(\delta_n/2)^p\ge 1.$$
Therefore
$$\begin{array}{ll}&(\vartheta(x),\vartheta(y))\in E\cr
\iff &\sum_{j\in\Bbb N}\sum_{n=n_j}^{n_{j+1}-1}d_n(\vartheta(x),\vartheta(y))^p<+\infty\cr
\iff &\exists k\forall j>k(x(j)=y(j))\cr
\iff &(x,y)\in E_1.\end{array}$$
Thus $\vartheta$ witnesses that $E_1\le_B E$.
\end{proof}

$c_0$-like equivalence relations were first studied by I. Farah \cite{farah}.

\begin{definition}
Let $(X_n,d_n),\,n\in\Bbb N$ be a sequence of pseudo-metric spaces. We define an equivalence relation
$E((X_n,d_n)_{n\in\Bbb N};0)$ on $\prod_{n\in\Bbb N}X_n$ by
$$(x,y)\in E((X_n,d_n)_{n\in\Bbb N};0)\iff\lim_{n\to\infty}d_n(x(n),y(n))=0$$ for $x,y\in\prod_{n\in\Bbb N}X_n$.
We call it a {\it $c_0$-like equivalence relation}. If $(X_n,d_n)=(X,d)$ for every $n\in\Bbb N$, we write
$E((X,d);0)=E((X_n,d_n)_{n\in\Bbb N};0)$ for the sake of brevity.
\end{definition}

Farah mainly investigated the case named $c_0$-equalities that all $(X_n,d_n)$'s are finite metric spaces and denoted it by ${\rm D}(\langle X_n,d_n\rangle)$.

\begin{theorem} \label{c0}
Let $X_n,\,n\in\Bbb N$ be a sequence of Polish spaces, $d_n$ a Borel pseudo-metric on $X_n$ for each $n$. Denote $E=E((X_n,d_n)_{n\in\Bbb N};0)$.
We have
\begin{enumerate}
\item[(i)] $\lim_{n\to\infty}\delta(X_n)=0\iff E\le_B\Bbb R^\Bbb N/c_0$;
\item[(ii)] $(\delta(X_n))_{n\in\Bbb N}$ does not converge to $0\iff E_1\le_B E$.
\end{enumerate}
\end{theorem}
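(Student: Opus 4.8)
The plan is to follow the pattern of the proof of Theorem \ref{lp}. First I would note that $\Bbb R^\Bbb N/c_0$ is an orbit equivalence relation induced by a Polish group action (that of the Polish group $c_0$ on $\Bbb R^\Bbb N$ by translation), thus $E_1\not\le_B\Bbb R^\Bbb N/c_0$ (see \cite{gao} Theorem 10.6.1). Since the hypotheses of (i) and (ii) are exact negations of one another, it then suffices to prove the two implications ($\Rightarrow$); the two reverse implications follow from $E_1\not\le_B\Bbb R^\Bbb N/c_0$ and transitivity of $\le_B$, exactly as at the start of the proof of Theorem \ref{lp}.

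For (i), I would assume $\lim_n\delta(X_n)=0$ and set $\e_n=\delta(X_n)+2^{-n}$, so that $\e_n\to 0$ and, as in the proof of Theorem \ref{lp}(i), each $X_n$ is $\e_n$-separable. As there, choose a countable $\e_n$-dense set $S_n\subseteq X_n$ on which $d_n$ is a metric, together with a Borel map $h_n:X_n\to S_n$ with $d_n(u,h_n(u))<\e_n$; since $d_n(x(n),h_n(x(n)))<\e_n\to 0$ we get $(x,(h_n(x(n)))_n)\in E$, so $x\mapsto(h_n(x(n)))_n$ is a Borel reduction of $E$ to $E':=E((S_n,d_n)_{n\in\Bbb N};0)$. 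It remains to reduce $E'$ to $\Bbb R^\Bbb N/c_0$. By Aharoni's theorem \cite{aharoni} there are a constant $K$ and maps $T_n:S_n\to c_0$ with $d_n(u,v)\le\|T_n(u)-T_n(v)\|_{c_0}\le K d_n(u,v)$ for all $u,v\in S_n$. Fix a bijection $\pi:\Bbb N\to\Bbb N\times\Bbb N$, write $\pi(m)=(n_m,k_m)$, and define $\Theta:\prod_nS_n\to\Bbb R^\Bbb N$ by $\Theta(z)(m)=T_{n_m}(z(n_m))(k_m)$, which is Borel since each of its coordinates depends on only one $S_n$-valued coordinate of $z$. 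The crux is the equivalence, for $z,w\in\prod_nS_n$,
$$\lim_{n\to\infty}d_n(z(n),w(n))=0\iff\lim_{m\to\infty}|\Theta(z)(m)-\Theta(w)(m)|=0.$$
For ($\Rightarrow$), given $\e>0$ all but finitely many $n$ satisfy $\|T_n(z(n))-T_n(w(n))\|_{c_0}\le K d_n(z(n),w(n))<\e$, while for each of the remaining finitely many $n$ the element $T_n(z(n))-T_n(w(n))$ lies in $c_0$ and hence has only finitely many coordinates of modulus $\ge\e$; so only finitely many $m$ are ``bad''. For ($\Leftarrow$), if every $m\ge M$ is good then every $n$ not occurring as a first coordinate among $\pi(0),\dots,\pi(M-1)$ satisfies $d_n(z(n),w(n))\le\|T_n(z(n))-T_n(w(n))\|_{c_0}\le\e$. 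Composing $\Theta$ with $x\mapsto(h_n(x(n)))_n$ then yields $E\le_B\Bbb R^\Bbb N/c_0$.

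For (ii), I would suppose $(\delta(X_n))_n$ does not converge to $0$, and fix $\e>0$ and an infinite set $A=\{n_0<n_1<\cdots\}$ with $\delta(X_n)\ge\e$ for all $n\in A$. For $n\in A$ the space $(X_n,d_n)$ is not $\e/2$-separable (as $\e/2<\delta(X_n)$), so Theorem \ref{separable} gives a perfect set $C_n\subseteq X_n$ with $d_n(u,v)\ge\e/4$ for distinct $u,v\in C_n$; fix a Borel injection $g_n:2^\Bbb N\to C_n$, and fix arbitrary points $x_n^*\in X_n$ for $n\notin A$. Define $\vartheta:(2^\Bbb N)^\Bbb N\to\prod_nX_n$ by $\vartheta(x)(n_j)=g_{n_j}(x(j))$ and $\vartheta(x)(n)=x_n^*$ for $n\notin A$; this is Borel. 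For $x,y\in(2^\Bbb N)^\Bbb N$ the sequence $n\mapsto d_n(\vartheta(x)(n),\vartheta(y)(n))$ vanishes off $A$, equals $0$ at $n_j$ when $x(j)=y(j)$, and is $\ge\e/4$ at $n_j$ when $x(j)\ne y(j)$; since its nonzero values are bounded away from $0$, it converges to $0$ iff $x(j)=y(j)$ for all but finitely many $j$, i.e. iff $(x,y)\in E_1$. Thus $\vartheta$ witnesses $E_1\le_B E$.

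The routine parts — Borelness of all the maps, the reduction to the countable metric spaces $S_n$ exactly as in Theorem \ref{lp}, and the invocation of Aharoni's theorem — should cause no difficulty. The step I expect to require the most care is the flattening equivalence in (i): it is essential that the Aharoni images lie in $c_0$ rather than merely in $\ell_\infty$, since it is precisely the coordinatewise decay of each difference $T_n(u)-T_n(v)$ that keeps the set of ``bad'' indices finite and thereby lets one trade the $c_0$-like condition on the product $\prod_nX_n$ for a single $c_0$-type condition on a real sequence.
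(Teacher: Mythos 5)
Your proposal is correct and takes essentially the same approach as the paper: the same passage to countable $(\delta(X_n)+2^{-n})$-dense subsets $S_n$, the same Aharoni embedding followed by flattening through a pairing bijection (with the identical two-directional check, including the key use of coordinatewise decay in $c_0$ to control the finitely many exceptional $n$) for (i), and the same construction of $\vartheta$ from the perfect $\e/4$-separated sets supplied by Theorem \ref{separable} for (ii), with only slightly more conservative constants. The reduction of both reverse implications to $E_1\not\le_B\Bbb R^\Bbb N/c_0$ is likewise exactly the paper's strategy, carried over from the proof of Theorem \ref{lp}.
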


\begin{proof}
We closely follows the proof of Theorem 3.2. Some conclusions will be made without proofs for brevity, since they follow by similar arguments.

(i) Note that for each $x$ we have
$$\lim_{n\to\infty}d_n(x(n),\theta(x)(n))=\lim_{n\to\infty}(\delta(X_n)+2^{-n})=0,$$
i.e. $(x,\theta(x))\in E$. It follows that $(x,y)\in E\Leftrightarrow(\theta(x),\theta(y))\in E$. Hence $E\le_B E((S_n,d_n)_{n\in\Bbb N};0)$.

Fix a bijection $\langle\cdot,\cdot\rangle:\Bbb N^2\to\Bbb N$. We define $\theta_2:\prod_{n\in\Bbb N}S_n\to\Bbb R^\Bbb N$ by
$$\theta_2(x)(\langle n,m\rangle)=T_n(x(n))(m)$$
for $x\in\prod_{n\in\Bbb N}S_n$ and $n,m\in\Bbb N$. It is easy to
see that $\theta_2$ is Borel.

Now we check that $\theta_2$ is a reduction.

For every $x,y\in\prod_{n\in\Bbb N}S_n$, if $(x,y)\in
E((S_n,d_n)_{n\in\Bbb N};0)$, then
$$\lim_{n\to\infty}d_n(x(n),y(n))\to 0.$$
So $\forall\varepsilon>0\exists N\forall
n>N(d_n(x(n),y(n))<\varepsilon)$. Since
$\|T_n(x(n))-T_n(y(n))\|_{c_0}\le Kd_n(x(n),y(n))<K\varepsilon$, we
have
$$\forall n>N\forall m(|T_n(x(n))(m)-T_n(y(n))(m)|<K\varepsilon).$$
For $n\le N$, since $T_n(x(n)),T_n(y(n))\in c_0$, we have
$$\lim_{m\to\infty}|T_n(x(n))(m)-T_n(y(n))(m)|=0.$$
Therefore, for all
but finitely many $(n,m)$'s, we have
$$|\theta_2(x)(\langle n,m\rangle)-\theta_2(y)(\langle
n,m\rangle)|=|T_n(x(n))(m)-T_n(y(n))(m)|<K\varepsilon.$$ Thus
$$\lim_{\langle n,m\rangle\to\infty}|\theta_2(x)(\langle n,m\rangle)-\theta_2(y)(\langle
n,m\rangle)|=0.$$ It follows that $\theta_2(x)-\theta_2(y)\in c_0$.

On the other hand, for every $x,y\in\prod_{n\in\Bbb N}S_n$, if
$\theta_2(x)-\theta_2(y)\in c_0$, then
$$\forall\varepsilon>0\exists N\forall n>N\forall m(|\theta_2(x)(\langle
n,m\rangle)-\theta_2(y)(\langle n,m\rangle)|<\varepsilon).$$
Therefore, for $n>N$ we have
$$\begin{array}{ll}d_n(x(n),y(n))&\le\|T_n(x(n))-T_n(y(n))\|_{c_0}\cr
&=\sup\limits_{m\in\Bbb N}|T_n(x(n))(m)-T_n(y(n))(m)|\cr
&=\sup\limits_{m\in\Bbb N}|\theta_2(x)(\langle
n,m\rangle)-\theta_2(y)(\langle
n,m\rangle)|\le\varepsilon.\end{array}$$ It follows that $(x,y)\in
E((S_n,d_n)_{n\in\Bbb N};0)$.

To sum up, we have $E\le_B E((S_n,d_n)_{n\in\Bbb N};0)\le_B\Bbb R^\Bbb N/c_0$.

(ii) Assume that $(\delta(X_n))_{n\in\Bbb N}$ does not converge to $0$. Then there are $c>0$ and a strictly increasing sequence of
natural numbers $(n_j)_{j\in\Bbb N}$ such that $\delta(X_{n_j})>c$ for each $j$. From Theorem \ref{separable}, there is a Borel injection
$g_j':2^\Bbb N\to X_{n_j}$ such that $d_{n_j}(g_j'(\alpha),g_j'(\beta))\ge c/2$ for distinct $\alpha,\beta\in 2^\Bbb N$.
Fix an element $a_n\in X_n$ for every $n\in\Bbb N$. Define $\vartheta':(2^\Bbb N)^\Bbb N\to\prod_{n\in\Bbb N}X_n$ by
$$\vartheta'(x)(n)=\left\{\begin{array}{ll}g_j'(x(j)), & n=n_j,\cr a_n, &\mbox{otherwise}.\end{array}\right.$$
Then $\vartheta'$ witnesses that $E_1\le_B E$.
\end{proof}

\section{Further remarks}

The following condition was introduced in \cite{ding1} to investigate the position of $\ell_p$-like equivalence relations.

$(\ell 1)$ {\it $\forall c>0\exists x,y\in\prod_{n\in\Bbb N}X_n$
such that $\forall n(d_n(x(n),y(n))^p<c)$ and
$$\sum_{n\in\Bbb N}d_n(x(n),y(n))^p=+\infty.$$}

Let $X_n,\,n\in\Bbb N$ be a sequence of Polish spaces, $d_n$ a Borel pseudo-metric on $X_n$ for each $n$ and $p\ge 1$. Denote $E=E((X_n,d_n)_{n\in\Bbb N};p)$.
It was proved in \cite{ding1} that
\begin{enumerate}
\item[(i)] if $(\ell 1)$ holds, then $\Bbb R^\Bbb N/\ell_1\le_B E$;
\item[(ii)] if $(\ell 1)$ fails, then either $E_1\le_B E,E\sim_B E_0$ or $E$ is trivial, i.e. all elements in $\prod_{n\in\Bbb N}X_n$ are equivalent.
\end{enumerate}

Thus we have a corollary of Theorem \ref{lp}.

\begin{corollary}
Denote $E=E((X_n,d_n)_{n\in\Bbb N};p)$. We have
\begin{enumerate}
\item[(a)] $\sum_{n\in\Bbb N}\delta(X_n)^p<+\infty$ and $(\ell 1)$ fails $\iff E\sim_B E_0$ or $E$ is trivial;
\item[(b)] $\sum_{n\in\Bbb N}\delta(X_n)^p<+\infty$ and $(\ell 1)$ holds $\iff\Bbb R^\Bbb N/\ell_1\le_B E\le_B E(c_0;p)$;
\item[(c)] $\sum_{n\in\Bbb N}\delta(X_n)^p=+\infty\iff E_1\le_B E$.
\end{enumerate}
\end{corollary}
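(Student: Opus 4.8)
The plan is to deduce the corollary by combining Theorem \ref{lp} with the two facts cited from \cite{ding1}, so the work is almost entirely bookkeeping: matching up the hypotheses of each clause with one of the listed outcomes. First I would observe that clauses (a), (b), (c) partition all possibilities according to the value of $\sum_{n}\delta(X_n)^p$ (finite or infinite) and, when finite, according to whether $(\ell 1)$ holds or fails; so it suffices to prove each implication separately and the reverse implications will follow by mutual exclusivity once the forward directions are established.

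For (c), this is literally Theorem \ref{lp}(ii), so nothing new is needed: $\sum_n\delta(X_n)^p=+\infty\iff E_1\le_B E$. For the forward direction of (b), assume $\sum_n\delta(X_n)^p<+\infty$ and $(\ell 1)$ holds. By Theorem \ref{lp}(i) the first hypothesis gives $E\le_B E(c_0;p)$, and by fact (i) from \cite{ding1} the second hypothesis gives $\Bbb R^\Bbb N/\ell_1\le_B E$; together these yield $\Bbb R^\Bbb N/\ell_1\le_B E\le_B E(c_0;p)$. For the forward direction of (a), assume $\sum_n\delta(X_n)^p<+\infty$ and $(\ell 1)$ fails. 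Fact (ii) from \cite{ding1} then says $E_1\le_B E$, $E\sim_B E_0$, or $E$ is trivial; but $\sum_n\delta(X_n)^p<+\infty$ together with Theorem \ref{lp}(ii) rules out $E_1\le_B E$ (here one should note that $E_1\not\le_B E_0$ and that $E_1\le_B E$ would contradict finiteness of the sum), leaving $E\sim_B E_0$ or $E$ trivial.

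For the reverse implications I would argue by contradiction using that the three left-hand conditions are mutually exclusive and exhaustive. If $E\sim_B E_0$ or $E$ is trivial, then in particular $E_1\not\le_B E$ (since $E_1\not\le_B E_0$ and $E_1$ is nontrivial), so by (c) we must have $\sum_n\delta(X_n)^p<+\infty$; moreover $(\ell 1)$ cannot hold, because if it did then by fact (i) of \cite{ding1} we would have $\Bbb R^\Bbb N/\ell_1\le_B E$, which is impossible when $E\sim_B E_0$ ($\Bbb R^\Bbb N/\ell_1$ is not reducible to $E_0$, being non-essentially-countable) or when $E$ is trivial. This forces the hypothesis of (a). The reverse implication for (b) is handled the same way: $\Bbb R^\Bbb N/\ell_1\le_B E\le_B E(c_0;p)$ gives $E_1\not\le_B E$ (as $E_1\not\le_B E(c_0;p)$), hence $\sum_n\delta(X_n)^p<+\infty$ by (c), and the presence of $\Bbb R^\Bbb N/\ell_1$ below $E$ is incompatible with the outcomes of (a), so $(\ell 1)$ must hold by fact (ii) of \cite{ding1}.

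The only real subtlety — the ``main obstacle'' such as it is — is making sure every exclusion step invokes a legitimately available nonreducibility: specifically $E_1\not\le_B E_0$, $E_1\not\le_B E(c_0;p)$ (which follows from the remark after the definition of $\ell_p$-like equivalence relations, since $E(c_0;p)$ is induced by a Polish group action and hence $E_1\not\le_B E(c_0;p)$ by \cite{gao} Theorem 10.6.1), and the fact that $\Bbb R^\Bbb N/\ell_1$ is neither trivial nor Borel reducible to $E_0$ (it is not essentially countable). All of these are standard, so once they are cited the proof is a short case analysis and I would write it as such.
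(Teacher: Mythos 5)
Your proposal is correct and is essentially the argument the paper intends: the corollary is stated as an immediate consequence of Theorem \ref{lp} together with facts (i) and (ii) quoted from \cite{ding1}, with the reverse implications following from the mutual exclusivity of the three right-hand alternatives (via $E_1\not\le_B E_0$, $E_1\not\le_B E(c_0;p)$, and $\Bbb R^\Bbb N/\ell_1$ being nontrivial and not reducible to $E_0$). The paper gives no further detail, so your bookkeeping fills in exactly what was left implicit.
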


Another condition was introduced by I. Farah \cite{farah} for investigating $c_0$-equalities.

$(*)$ {\it $\forall c>0\exists\varepsilon<c(\varepsilon>0$ and $\exists^\infty i\exists u_i,v_i\in X_i(\varepsilon<d_i(u_i,v_i)<c))$.}

It is easy to check that $(*)$ holds iff for arbitrary $c>0$, there exist $x,y\in\prod_{n\in\Bbb N}X_n$ such that $\forall n(d_n(x(n),y(n))<c)$ and
$(d_n(x(n),y(n)))_{n\in\Bbb N}$ does not converge to $0$.

With similar arguments, we get a corollary of Theorem \ref{c0}.

\begin{corollary}
Denote $E=E((X_n,d_n)_{n\in\Bbb N};0)$. We have
\begin{enumerate}
\item[(a)] $\lim_{n\to\infty}\delta(X_n)=0$ and $(*)$ fails $\iff E\sim_B E_0$ or $E$ is trivial;
\item[(b)] $\lim_{n\to\infty}\delta(X_n)=0$ and $(*)$ holds $\iff E_0^\omega\le_B E\le_B\Bbb R^\Bbb N/c_0$;
\item[(c)] $(\delta(X_n))_{n\in\Bbb N}$ does not converge to $0\iff E_1\le_B E$.
\end{enumerate}
\end{corollary}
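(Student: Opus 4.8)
The plan is to mimic the derivation of the corollary following Theorem~\ref{lp}, with Theorem~\ref{c0} in place of Theorem~\ref{lp} and $E_0^\omega,\Bbb R^\Bbb N/c_0$ in place of $\Bbb R^\Bbb N/\ell_1,E(c_0;p)$. First I would isolate the two facts about $(*)$ that play the role of the \cite{ding1}-facts about $(\ell 1)$: (I) if $(*)$ holds, then $E_0^\omega\le_B E$; and (II) if $(*)$ fails, then $E_1\le_B E$, $E\sim_B E_0$, or $E$ is trivial. Granting (I) and (II), item (c) is exactly Theorem~\ref{c0}(ii), and (a), (b) follow by the same bookkeeping as in the $\ell_p$-case, using also that $E_1$ is not Borel reducible to any orbit equivalence relation induced by a Polish group action (see \cite{gao} Theorem 10.6.1) — hence $E_1\not\le_B\Bbb R^\Bbb N/c_0$ (the Polish group $c_0$ acts on $\Bbb R^\Bbb N$ by translation) and $E_1\not\le_B E_0^\omega$ ($E_0^\omega$ is the orbit equivalence relation of the countable product $\prod_k(\Bbb Z/2)^{(\Bbb N)}$ acting coordinatewise on $2^{\Bbb N\times\Bbb N}$) — together with the standard facts $E_0<_B E_0^\omega$ and: every trivial equivalence relation is $<_B E_0$. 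For instance, for $(\Leftarrow)$ of (b): from $E\le_B\Bbb R^\Bbb N/c_0$ we get $E_1\not\le_B E$, hence $\lim_{n\to\infty}\delta(X_n)=0$ by Theorem~\ref{c0}(ii); and were $(*)$ to fail, (II) would leave $E\sim_B E_0$ or $E$ trivial (the case $E_1\le_B E$ being excluded), each contradicting $E_0^\omega\le_B E$. The remaining implications have the same shape, and I omit them.

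For (I): set $c_k=2^{-k}$; applying $(*)$ to each $c_k$ produces $\varepsilon_k\in(0,c_k)$ and an infinite set $I_k\subseteq\Bbb N$ such that for $i\in I_k$ there exist $u^k_i,v^k_i\in X_i$ with $\varepsilon_k<d_i(u^k_i,v^k_i)<c_k$. A diagonal construction yields pairwise disjoint infinite sets $J_k\subseteq I_k$; list $J_k=\{n^k_0<n^k_1<\cdots\}$ and fix $a_n\in X_n$ for $n\notin\bigcup_kJ_k$. Define $\vartheta:2^{\Bbb N\times\Bbb N}\to\prod_nX_n$ by $\vartheta(x)(n)=a_n$ for $n\notin\bigcup_kJ_k$, $\vartheta(x)(n^k_i)=u^k_{n^k_i}$ if $x(i,k)=0$, and $\vartheta(x)(n^k_i)=v^k_{n^k_i}$ if $x(i,k)=1$. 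Then $\vartheta$ is continuous; for $n=n^k_i$ the distance $d_n(\vartheta(x)(n),\vartheta(y)(n))$ is $0$ when $x(i,k)=y(i,k)$ and lies in $(\varepsilon_k,c_k)$ otherwise, and it is $0$ off $\bigcup_kJ_k$. Since $c_k\to0$, for any $\eta>0$ only the finitely many blocks with $c_k\ge\eta$ matter, and on each of those only finitely many coordinates carry positive distance once $x,y$ agree on column $k$ from some point on; thus $(x,y)\in E_0^\omega$ implies $(\vartheta(x),\vartheta(y))\in E$. Conversely, since each $\varepsilon_k>0$, if $x,y$ disagree on some single column $k$ at infinitely many $i$, then the distance is $\ge\varepsilon_k$ at infinitely many coordinates, so $(\vartheta(x),\vartheta(y))\notin E$. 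Hence $\vartheta$ is a Borel reduction of $E_0^\omega$ to $E$.

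For (II): assume $(*)$ fails; we may assume $E_1\not\le_B E$, so $\lim_{n\to\infty}\delta(X_n)=0$ by Theorem~\ref{c0}(ii). Fix $c>0$ witnessing the failure of $(*)$, so that for every $\varepsilon\in(0,c)$ only finitely many $i$ admit $u,v\in X_i$ with $\varepsilon<d_i(u,v)<c$. From this one can choose $\gamma_i\to0$ with $0<\gamma_i<c/2$ such that $d_i$ takes no value in $(\gamma_i,c)$ for all large $i$. For such $i$, the relation $\sim_i$ defined by $u\sim_i v\Leftrightarrow d_i(u,v)\le\gamma_i$ is an equivalence relation on $X_i$ (from $d_i\le2\gamma_i<c$ one gets $d_i\le\gamma_i$); and since $\lim_{n\to\infty}\delta(X_n)=0$, $X_i$ is $\rho_i$-separable with $\rho_i:=\delta(X_i)+2^{-i}\to0$, so once $2\rho_i<c$ any two points lying within $\rho_i$ of a common net point have $d_i$-distance $<2\rho_i<c$, hence $\le\gamma_i$; thus $\sim_i$ has at most countably many classes. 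Taking Borel maps $\pi_i:X_i\to\Bbb N_i$ onto countable sets with $\pi_i(u)=\pi_i(v)$ iff $u\sim_i v$, and Borel sections, one verifies that $E$ is Borel bireducible with the eventual-agreement relation $F$ on $\prod_{i\ge N}\Bbb N_i$; the crucial point is that $(x,y)\in E$ iff $x(i)\sim_i y(i)$ for all large $i$, where the forward direction uses $\gamma_i\to0$ and the backward direction uses that large $i$ satisfy $d_i(x(i),y(i))<c$, hence (by the gap) $\le\gamma_i$. If all but finitely many $\Bbb N_i$ are singletons then $F$, and hence $E$, is trivial; otherwise infinitely many factors are nontrivial, $F\sim_B E_0$, and so $E\sim_B E_0$. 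I expect the delicate part to be precisely this analysis — extracting $(\gamma_i)$ from the combinatorial hypothesis, checking that $\sim_i$ is an equivalence relation with standard Borel (indeed countable) quotient, and correctly isolating the trivial case, remembering that, $E$ being of $c_0$-type, any finite set of coordinates is immaterial; the construction in (I) and the deductions of (a), (b), (c) are then comparatively routine.
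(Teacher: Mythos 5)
Your proposal is correct and follows the same route as the paper: the corollary is obtained from Theorem~\ref{c0} together with the two $(*)$-dichotomy facts you isolate as (I) and (II), in exact parallel with the way Corollary~4.1 is obtained from Theorem~\ref{lp} and the $(\ell 1)$-facts quoted from \cite{ding1}, and your bookkeeping (using that $E_1$ does not reduce to orbit equivalence relations of Polish group actions, that $E_0<_B E_0^\omega$, etc.) matches the intended derivation. The only difference is that the paper dismisses (I) and (II) with ``similar arguments'' (deferring to \cite{ding1} and Farah \cite{farah}), whereas you prove them in full; your block coding of $E_0^\omega$ for (I) and the gap/countable-quotient analysis for (II) are sound and are exactly the arguments being alluded to.
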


\end{document}